\def\section{\@startsection{section}{1}%
  \z@{1.1\linespacing\@plus\linespacing}{.8\linespacing}%
  {\normalfont\Large\scshape\centering}}
\theoremstyle{plain}
\newtheorem*{conj*}{Root Groups Conjecture}
\newtheorem*{thm1.2}{(1.2) Theorem}
\newtheorem*{thm1.3}{(1.3) Theorem}
\newtheorem*{thm1.4}{(1.4) Theorem}
\newtheorem*{prop*}{Proposition}
\newtheorem{prop}{Proposition}[section]
\newtheorem{thm}[prop]{Theorem}
\newtheorem{cor}[prop]{Corollary}
\newtheorem{lemma}[prop]{Lemma}
\theoremstyle{definition}
\newtheorem*{Def*}{Definition}
\newtheorem{notation}[prop]{Notation}
\newtheorem*{notation*}{Notation}
\newtheorem{remark}[prop]{Remark}
\DeclareMathOperator{\tld}{\sim\!}
\newcommand{\rr}{\mathbb{R}}
\newcommand{\zz}{\mathbb{Z}}
\newcommand{\frakG}{\mathfrak{G}}
\newcommand{\frakM}{\mathfrak{M}}
\newcommand{\frakX}{\mathfrak{X}}
\newcommand{\gc}{\gamma}
\newcommand{\gC}{\Gamma}
\newcommand{\gd}{\delta}
\newcommand{\nsg}{\trianglelefteq}
\newcommand{\Aut}{{\rm Aut}}
\newcommand{\bu}{\bullet}
\numberwithin{equation}{section}
\begin{document}
\title[Crossed modules and topological groups]{Crossed modules as maps between connected components  of  topological groups}
\author[Emmanuel D.~Farjoun,\qquad Yoav Segev]
{Emmanuel~D.~Farjoun\qquad\quad Yoav Segev}
\address{Emmanuel D.~Farjoun\\
        Department of Mathematics\\
        Hebrew University of Jerusalem, Givat Ram\\
        Jerusalem 91904\\
        Israel}
\email{farjoun@math.huji.ac.il}

\address{Yoav Segev\\
        Department of Mathematics\\
        Ben Gurion University of the Negev\\
        Beer Sheva\\
        Israel}
\email{yoavs@math.bgu.ac.il}

\keywords{crossed module, normal map, topological group, connected components}
\subjclass[2010]{Primary: 18G30, 55U10, 57T30}

\begin{abstract}
The purpose of this note is to observe that a homomorphism of discrete groups $f:\Gamma\to G$  arises as
the induced map $\pi_0(\mathfrak{M})\to \pi_0(\mathfrak{X})$ on path components of some {\em closed normal}
inclusion of topological groups $\mathfrak{M}\subseteq \mathfrak{X},$ if and only if 
the map $f$ can be equipped with a crossed module structure.
In that case  an essentially unique realization $\mathfrak{M}\subseteq \mathfrak{X}$  
exists by homotopically discrete topological groups. 
\end{abstract}
\maketitle
\section{Introduction}
The purpose of this note is to show that any crossed-module map (any {\it normal}
map in our terminology, see \cite{FS}) is obtained as   the natural map  on the group of
connected components $\pi_0(\frakM)\to \pi_0(\frakX),$ induced
from an inclusion  of a closed normal subgroup
$\frakM\subseteq \frakX$ of a topological group $\frakX$. In addition, the groups $\frakM$ and $\frakX$
can be taken to be {\it homotopically discrete}, i.e.~with contractible connected components.
In what follows we use the term ``normal map" of groups to refer to a map $f:N\to G$ that can be 
equipped with a crossed module structure $G\to \Aut(N).$
\begin{thm}\label{thm main}
For a topological group $\frakG$ let $\pi_0(\frakG)$ denote
the group of connected components of $\frakG,$ that is $\pi_0(\frakG)=\frakG/\frakG_1,$
where $\frakG_1$ is the connected component of the identity of $\frakG$.
Then:
\begin{enumerate}
\item
Suppose that $\frakM$ is a normal subgroup of the topological group $\frakX$.
Then the induced  map $\pi_0(\frakM)\to \pi_0(\frakX)$ given by $\frakM_1\gc\mapsto\frakX_1\gc,\quad\gc\in\frakM$
is a normal map.

\item
Conversely, given a normal map $n\colon N\to G$ of discrete groups, there is a topological
group $\frakX$ and a closed normal subgroup $\frakM$ of $\frakX$ with
a commuting diagram and isomorphisms as indicated:
\[
\xymatrix{
\pi_0(\frakM)\ar[r]^{\bar n}\ar[d]_{\cong} & \pi_0(\frakX)\ar[d]^{\cong}\\
N\ar[r]^n & G\\
}
\]
and with $\bar n$ the naturally induced  map of connected components.
\item In addition, the topological groups $\frakM$ and $\frakX$ in part (2) realizing the map $n$
can be chosen to be homotopically discrete.
\end{enumerate}
\end{thm}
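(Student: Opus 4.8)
\emph{Part (1).} Given a closed normal $\frakM\trianglelefteq\frakX$ I would verify the crossed-module axioms by hand: conjugation carries the connected group $\frakM_1$ to itself, and $\frakX_1$ acts trivially on $\pi_0(\frakM)$, so conjugation descends to an action of $\pi_0(\frakX)$ on $\pi_0(\frakM)$. Equivariance of $\bar n$ is immediate, and the Peiffer identity holds because for $x,y\in\frakM$ both $\bar n(\bar x)\cdot\bar y$ and $\bar x\,\bar y\,\bar x^{-1}$ are represented by the single element $xyx^{-1}\in\frakM$ (normality). The substance is the converse (2)--(3), and the plan is to produce $\frakM\trianglelefteq\frakX$ by one functorial construction that is homotopically discrete from the start, so that (2) and (3) are settled simultaneously. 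The guiding idea is to realize a short exact sequence of topological groups $\frakM\to\frakX\to\mathfrak{Y}$ whose long exact sequence reconstructs $n$.

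\emph{The construction.} I would pass through the dictionary between crossed modules and simplicial groups. Let $E_\bullet$ be the simplicial group whose Moore complex is $N\xrightarrow{\,n\,}G$ in degrees $1,0$, so that $E_0=G$, $E_1=N\rtimes G$ with $d_0(x,g)=g$, $d_1(x,g)=n(x)g$, the action and the Peiffer identity guaranteeing this is a genuine simplicial group. Its realization $\mathfrak{Y}:=|E_\bullet|$ is a topological group realizing the homotopy $2$-type of the crossed module: $\pi_0(\mathfrak{Y})=\mathrm{coker}\,n$, $\pi_1(\mathfrak{Y})=\Ker n$, and $\pi_i(\mathfrak{Y})=0$ for $i\ge 2$. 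I then form the d\'ecalage $\mathrm{Dec}_0E_\bullet$ (drop the last face in each degree, $(\mathrm{Dec}_0E)_n=E_{n+1}$) and set $\frakX:=|\mathrm{Dec}_0E_\bullet|$. The extra degeneracy exhibits $\mathrm{Dec}_0E_\bullet$ as equivalent to the constant simplicial group $E_0=G$, so $\frakX$ is homotopically discrete with $\pi_0(\frakX)\cong G$. The omitted last faces assemble into a simplicial homomorphism $p_\bullet\colon\mathrm{Dec}_0E_\bullet\to E_\bullet$ that is degreewise split surjective (split by $s_n$, since $d_{n+1}s_n=\mathrm{id}$); I define $\frakM:=\Ker\big(|p_\bullet|\colon\frakX\to\mathfrak{Y}\big)$, a closed normal subgroup of $\frakX$ realizing the degreewise kernel $\Ker p_\bullet$.

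\emph{Homotopy analysis and matching.} Since $p_\bullet$ is a degreewise short exact sequence of simplicial groups, its realization is a fibration sequence $\frakM\to\frakX\to\mathfrak{Y}$. The long exact sequence with $\pi_{\ge1}(\frakX)=0$ forces $\pi_i(\frakM)\cong\pi_{i+1}(\mathfrak{Y})=0$ for $i\ge1$, so $\frakM$ is homotopically discrete, and yields $0\to\Ker n\to\pi_0(\frakM)\to G\to\mathrm{coker}\,n\to0$. Now $\pi_0(\frakM)$ is a quotient of the degree-zero kernel $\Ker(d_1\colon N\rtimes G\to G)\cong N$ (via $x\mapsto(x,n(x)^{-1})$), and a short diagram chase—using that the boundary map $\Ker n=\pi_1(\mathfrak{Y})\to\pi_0(\frakM)$ is the inclusion $\Ker n\hookrightarrow N$—pins down $\pi_0(\frakM)\cong N$ with $\bar n\colon\pi_0(\frakM)\to\pi_0(\frakX)$ equal to $n$. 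Feeding $\frakM\trianglelefteq\frakX$ into part (1) produces a crossed module on $N\to G$ which, by naturality of the construction, coincides with the given $(n,a)$; this gives the commuting square with vertical isomorphisms, and homotopical discreteness of $\frakM,\frakX$ is part (3).

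\emph{Main obstacle.} The delicate point is the homotopical bookkeeping of the middle steps: that geometric realization sends a degreewise short exact sequence of simplicial groups to a genuine fibration sequence of topological groups (so the long exact sequence is available) and commutes with the formation of kernels, and—most subtly—that the fiber $\frakM$ has $\pi_0$ equal to $N$ \emph{on the nose} rather than to some other extension of $n(N)$ by $\Ker n$. This last identification is exactly where the non-injectivity of $n$ and the centrality of $\Ker n$ in $N$ enter, and it is the step I would write out in full.
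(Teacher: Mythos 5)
Your proposal is correct in outline, and in fact your construction and the paper's are the same object in different clothing: the paper's $E_\bullet$ is its normal simplicial group structure on ${\rm Bar}(G,N)$ from \cite{FS}, and the paper's $X_\bullet=G\ltimes{\rm Bar}(N,N)$ is precisely your d\'ecalage $\mathrm{Dec}_0\,{\rm Bar}(G,N)$ (degree $k$ is $G\times N^{k+1}$ in both), with the paper's map $\eta\colon X_\bullet\to{\rm Bar}(G,N)$ playing the role of your omitted-face projection $p_\bullet$ and $\frakM$ arising as its kernel in both cases. Where you genuinely diverge is in how $\pi_0(\frakM)\cong N$ and the discreteness of $\frakM$ are established. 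You route through heavy machinery --- realization sends degreewise short exact sequences of simplicial groups to fibration sequences, the long exact homotopy sequence, and an identification of the boundary map $\pi_1(\mathfrak{Y})\to\pi_0(\frakM)$ with the inclusion $\Ker n\hookrightarrow N$ --- and you yourself flag this last identification as the step you ``would write out in full.'' The paper short-circuits all of this by computing $\ker\eta$ explicitly: $M_k=\{(an,(a^{-1},1,\dots,1))\mid a\in N\}$ is a \emph{constant} simplicial group isomorphic to $N$, indeed discrete since $d_0=d_1$ on $M_1$, so $\pi_0(\frakM)=M_0\cong N$ on the nose, $\frakM$ is homotopically (in fact literally) discrete, and no exact-sequence chase, no boundary-map lemma, and no appeal to centrality of $\Ker n$ is needed. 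Your LES chase would go through, but note two points you would have to nail down: the boundary-map claim itself, and a convention subtlety in your identification $\Ker(d_1\colon N\rtimes G\to G)\cong N$ via $x\mapsto(x,n(x)^{-1})$ --- the Peiffer identity gives $(x,n(x)^{-1})\cdot(x',n(x')^{-1})=(x'x,\,n(x'x)^{-1})$, so this parametrization is an anti-isomorphism, which is exactly the order-reversal the paper absorbs by writing $M_0=\{(an,a^{-1})\}$ and using $(an,a^{-1})\mapsto a$. For part (1), your direct verification of the crossed-module axioms is the same content as the paper's route (its algebraic Lemma \ref{lem normal} plus the continuity-of-commutator argument showing $[\frakX_1,\frakM]\le\frakM_1$); you assert that fact rather than prove it, but the one-line connectedness argument is standard. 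In sum: your plan buys conceptual transparency (the d\'ecalage/fibration picture explains \emph{why} the construction works), while the paper's explicit bar-construction computation buys a complete elementary proof in which your ``main obstacle'' simply does not arise.
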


Moreover, given the proofs of the above, it is not hard to see that  for any map of groups $f:\Gamma\to G$ the set
of crossed module structures on $f$ (up to equivalence) is
naturally isomorphic to the set of ``realization" of $f,$ i.e.~maps of homotopically discrete topological groups
$\frakM\subseteq \frakX$ such that $f$ is the induced map    $\pi_0(\frakM)\to\pi_0(\frakX)$ (again up to the obvious 
equivalence of such realizations).  

\setcounter{subsection}{1}
\subsection{An example}
Given a normal map $n: N\to G$ (a crossed module map),
we will call an {\it inclusion} map $\bar n\colon\frakM\to\frakX$ as in Theorem \ref{thm main}(2),
{\it a closed normal realization} of $n$.
One example to keep in mind is the trivial map $\zz\to 0$ from the group of integers to the zero group,
taken as a map with a trivial crossed-module structure.
This map is induced from the closed normal realization $\zz\to \rr$ of the integers into the group
of real numbers; both topological groups here are homotopically discrete.
Thus  our point  is that such a realization  can be made for
{\it every crossed module map} $n: N\to G,$ and it is essentially unique 
with the  given  a crossed module structure on the map $n.$
This is not surprising in light of the well known comment by Quillen
that any normal  map $n$ is of the form $\pi_1(F)\to \pi_1(E)$
for some map  of  connected pointed topological spaces $F\to E,$
which is a part of a fibration sequence of connected spaces $F\to E\to B.$
Taking loop spaces we get a map
of loop spaces $\Omega F\to \Omega E$ whose ``quotient'' is the loop space $\Omega B$.
Thus the given map $n$  is now isomorphic to the map induced on path components
$\pi_0(\Omega F)\to \pi_0(\Omega E).$

 Hence our point in this note is to comment  that  the loop-map of loop spaces
given by Quillen's observation can  always be thought of, and has a closed
normal realization  as an
inclusion of (homotopically discrete) topological groups, as in Theorem \ref{thm main}.
In fact, this realization as a map of topological groups is probably clear
to the experts, since loop-maps of loop spaces can be ``rigidified'' into
maps of topological groups with the same underlying homotopy types.

Here we present  a {\it direct construction} that yields {\it homotopically discrete
spaces} as realizations, by Theorem \ref{thm main}(3).  In fact, it is not hard to show that any other
closed normal realization of $n$  maps uniquely to the homotopically
discrete realization constructed in  part (3) of Theorem \ref{thm main},
i.e., the realization in Theorem \ref{thm main}(3) is the universal-terminal  one.

\subsection{Topological groups and simplicial groups}
The main line of our construction in Theorem \ref{thm main}(2)
uses {\it simplicial groups} in order to
construct the topological groups $\frakX$ and its closed normal subgroup $\frakM$.
Indeed, we use the simplicial
group structure obtained in \cite{FS} on the {\it bar construction} ${\rm Bar}(G,N)$.
One observes that in order to prove
part (2) of Theorem \ref{thm main} it suffices to construct a closed normal realization
of simplicial groups  that induces the given normal map
on path components. The point is that one can realize a simplicial
set $X_{\bu}$ as a topological space $|X_{\bu}|$.
As is well known:
(i)\, realization turns  a simplicial group into a topological group and
(ii)\, realization turns a normal simplicial subgroup into
a closed normal subgroup. These facts follow from
observations of J.~Milnor (see \cite{M}).  The reason for assertion (i)
is that realization
as a functor from simplicial sets to topological spaces
strictly commutes with products via a natural homeomorphism:
$|X_{\bu}\times Y_{\bu}|\cong  |X_{\bu}|\times |Y_{\bu}|$.
Also, (ii) follows from the fact that a short exact sequence of simplicial groups turns, via realization, to a short
exact sequence of topological groups.

%
\section{Proof of part (1) of Theorem \ref{thm main}}
\medskip

\noindent
Recall that as in \cite{FS} we (usually) apply maps on the right.
We first need the following (easy) lemma.

\begin{lemma}\label{lem normal}
Let $G$ be a group and let $K, M$ be normal subgroups of $G$
with $K\le M$.  Let $\gC\nsg G$ be a normal subgroup of $G$ containing $K$ and
suppose that $[\gC,M]\le K$.
Then the map
\[
n\colon \gC/K\to G/M:\quad K\gc\mapsto M\gc,
\]
induced by the canonical
homomorphism $G/K\to G/M,$ is a normal map.
\end{lemma}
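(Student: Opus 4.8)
The plan is to exhibit an explicit crossed module structure on $n$, i.e.\ a homomorphism $G/M\to\Aut(\gC/K)$ satisfying the two crossed module (Peiffer) axioms, with the underlying action given by conjugation. Write $N=\gC/K$ for the source and $H=G/M$ for the target. First I would dispose of the routine preliminaries: since $K\le M$, the assignment $K\gc\mapsto M\gc$ is independent of the chosen representative, and being induced by the canonical projection $G/K\to G/M$ it is a homomorphism, so $n$ is a well-defined map of groups.

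Next I would construct the action. Because $\gC\nsg G$ and $K\nsg G$, conjugation by any $g\in G$ carries $\gC$ into itself and $K$ into itself, hence induces an automorphism $K\gc\mapsto K\gc^{g}$ of $N=\gC/K$, where $\gc^{g}=g^{-1}\gc g$; this gives a homomorphism $G\to\Aut(\gC/K)$. The one substantive point—and the sole place the hypothesis $[\gC,M]\le K$ enters—is that this action descends to $H=G/M$. Indeed, for $m\in M$ and $\gc\in\gC$ one has $\gc^{m}=\gc\,[\gc,m]$ with $[\gc,m]\in[\gC,M]\le K$, so $K\gc^{m}=K\gc$; thus $M$ acts trivially and the homomorphism factors through $G/M\to\Aut(\gC/K)$. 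I expect this descent to be the main obstacle, in the sense that it carries all the content of the lemma, while the remaining verifications are purely formal.

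Finally I would check the two axioms for the pair $(n,\,\text{conjugation})$. For equivariance, taking $a=K\gc$ and $h=Mg$ gives $n(a^{h})=M(g^{-1}\gc g)=(Mg)^{-1}(M\gc)(Mg)$, which is exactly $n(a)$ conjugated by $h$ in $H$; this holds because the action is conjugation and $n$ is the canonical projection. For the Peiffer identity, writing $a=K\gc$, $b=K\gd$ and using that $n(b)=M\gd$ acts as conjugation by the representative $\gd$, one computes $a^{n(b)}=K\gc^{\gd}=K\gd^{-1}\gc\gd=(K\gd)^{-1}(K\gc)(K\gd)=b^{-1}ab$. Both identities are therefore direct computations, and together with the construction of the action they show that $n$ admits a crossed module structure, i.e.\ that $n$ is a normal map.
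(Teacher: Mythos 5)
Your proposal is correct and follows essentially the same route as the paper: both define the conjugation action $G\to\Aut(\gC/K)$ (well-defined since $\gC,K\nsg G$), use $[\gC,M]\le K$ to show $M$ acts trivially so the action descends to $G/M$, and then verify the two crossed module axioms (NM1) and (NM2) by the same direct coset computations. Your explicit identity $\gc^{m}=\gc\,[\gc,m]$ just spells out the descent step that the paper states more briefly.
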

\begin{proof}
For $g\in G,$ let $\hat\ell(g)\in\Aut(\gC/K)$ be defined by $K\gc\mapsto K\gc^g$.
Then $\hat\ell(g)\in\Aut(\gC/K)$ because  $\gC$ and $K$ are normal in $G$.
The map $\hat\ell\colon G\to\Aut(\gC/K)$ is a homomorphism.  Since $[\gC,M]\le K,$
it follows that $M\le\ker(\hat\ell)$ and so $\hat\ell$ induces a homomorphism
$\ell\colon G/M\to\Aut(\gC/K)$ defined by
$(K\gc)^{Mg}=K\gc^g$.  We check that $n$ is a normal map.
We have
\[
\left((K\gc)^{Mg}\right)n=(K\gc^g)n=M\gc^g=(M\gc)^{Mg}=((K\gc)n)^{Mg},
\]
for all $\gc\in\gC$ and $g\in G,$ where $(M\gc)^{Mg}$ is conjugation in $G/M$.  Thus (NM1)
(of subsection 2.4 in \cite{FS}) holds.

Next
\[
(K\gc)^{(K\gd)n}=(K\gc)^{M\gd}=K\gc^{\gd}=(K\gc)^{K\gd},
\]
for all $\gc,\gd\in\gC,$ where $(K\gc)^{K\gd}$ is conjugation in $\gC/K$.  Hence (NM2) holds.
\end{proof}

\begin{notation}
For a topological group $\frakG$ we denote by
$\frakG_1$ the connected component of the
identity of $\frakG$.  We let $\pi_0(\frakG)$ be the group of connected components
of $\frakG,$ so $\pi_0(\frakG)=\frakG/\frakG_1$.  Notice that if $\frakG$ is discrete,
then $\pi_0(\frakG)=\frakG$.
\end{notation}

\begin{cor}
Let $\frakX$ be a topological group
and let $\frakM$ be a   normal subgroup of $\frakX$.
Then
\begin{enumerate}
\item
$\frakM_1$ is normal in $\frakX$ and $\frakM_1\le \frakX_1;$

\item
$[\frakX_1,\frakM]\le \frakM_1;$

\item
the map $n\colon\pi_0(\frakM)\to \pi_0(\frakX)$ defined by $\frakM_1\gc\mapsto \frakX_1\gc,$ $\gc\in\frakM,$ is a normal map.
\end{enumerate}
\end{cor}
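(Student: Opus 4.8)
The plan is to derive all three parts from standard facts about identity components of topological groups and then read off part (3) directly from Lemma~\ref{lem normal}. Thus the substantive content lies in parts (1) and (2), which are purely topological; once these are in hand, part (3) is a formal application of the lemma with $G=\frakX$, $\gC=\frakM$, $K=\frakM_1$ and $M=\frakX_1$.

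For part (1), I would first recall that $\frakX_1$ is a (closed) normal subgroup of $\frakX$ and that $\frakM_1$ is a subgroup of $\frakM$, by the usual argument that a continuous image of a connected set containing the identity---formed using multiplication, inversion, or conjugation---again lies in the identity component. To see $\frakM_1\le\frakX_1$, I would observe that the inclusion $\frakM\hookrightarrow\frakX$ is continuous, so it carries the connected set $\frakM_1$ (which contains the identity) into the identity component $\frakX_1$. For normality of $\frakM_1$ in $\frakX$, I would fix $x\in\frakX$ and consider the conjugation homeomorphism $g\mapsto g^x$ of $\frakX$; since $\frakM\nsg\frakX$ it restricts to a self-homeomorphism of $\frakM$ fixing the identity, and hence sends the identity component $\frakM_1$ onto itself. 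Therefore $\frakM_1^x=\frakM_1$ for every $x\in\frakX$, so $\frakM_1\nsg\frakX$.

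Part (2) is the key step and the only place where connectedness does real work. I would fix $m\in\frakM$ and consider the map $\frakX_1\to\frakX$ given by $a\mapsto[a,m]$. Because $\frakM$ is normal, this commutator lies in $\frakM$ for every $a$, so the map takes values in $\frakM$; it is continuous, being built from multiplication and inversion. Its domain $\frakX_1$ is connected and its value at $a=1$ is the identity, so the image is a connected subset of $\frakM$ containing the identity and is therefore contained in the identity component $\frakM_1$. Letting $m$ range over $\frakM$, this shows every generating commutator of $[\frakX_1,\frakM]$ lies in the subgroup $\frakM_1$, whence $[\frakX_1,\frakM]\le\frakM_1$.

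Finally, for part (3) I would apply Lemma~\ref{lem normal} with $G=\frakX$, $\gC=\frakM$, $K=\frakM_1$ and $M=\frakX_1$. The hypotheses are exactly what parts (1) and (2) supply: $\frakM_1$ and $\frakX_1$ are normal in $\frakX$ with $\frakM_1\le\frakX_1$; $\frakM$ is normal in $\frakX$ and contains $\frakM_1$; and $[\frakM,\frakX_1]\le\frakM_1$ by part (2), using $[\frakM,\frakX_1]=[\frakX_1,\frakM]$. The lemma then yields that the induced map $\frakM/\frakM_1\to\frakX/\frakX_1$, that is $n\colon\pi_0(\frakM)\to\pi_0(\frakX)$ sending $\frakM_1\gc\mapsto\frakX_1\gc$, is a normal map. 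I expect the main obstacle to be nothing more than the continuity-and-connectedness argument of part (2); everything else is bookkeeping against the lemma.
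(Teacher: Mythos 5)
Your proposal is correct and follows essentially the same route as the paper: conjugation by elements of $\frakX$ as a continuous automorphism of $\frakM$ preserving its identity component for part (1), the continuity-and-connectedness argument on the commutator map $x\mapsto[x,m]$ for part (2), and an application of Lemma~\ref{lem normal} with $G=\frakX$, $\gC=\frakM$, $K=\frakM_1$, $M=\frakX_1$ for part (3). Your explicit remarks that $\frakX_1\nsg\frakX$ and that $[\frakM,\frakX_1]=[\frakX_1,\frakM]$ are harmless additions that the paper leaves implicit.
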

\begin{proof}
Since $(\frakM_1)a=\frakM_1,$ for any continuous automorphism $a$ of $\frakM,$
and since the restriction of conjugation by $g\in \frakX$ to $\frakM$
is a continuous automorphism of $\frakM,$ it follows that $\frakM_1\nsg \frakX$.
Clearly $\frakM_1\le \frakX_1$.

For each $m\in\frakM$ the map $[\ \, , m]\colon x\mapsto [x,m]=x^{-1}x^m$ is a continuous
map $\frakX\to\frakX$ because it is the product of the two continuous maps:
$x\mapsto x^{-1}$ and $x\mapsto x^m$.  Hence, since $\frakX_1$
is connected, its image under the map $[\ \, ,m]$
is connected and it is contained in $\frakM$.  It
follows that $[\frakX_1,m]\subseteq\frakM_1,$ so $[\frakX_1,\frakM]\le\frakM_1$
and part (2) holds.
Part (3) follows from Lemma \ref{lem normal}.
\end{proof}

\section{Proof of parts (2) and (3) of Theorem \ref{thm main}}
In this section we prove parts (2) and (3) of Theorem \ref{thm main}.
For this we use the results of \cite{FS}.  Our notation is as
in \cite{FS}.

Given a group homomorphism $n\colon N\to G,$
the {\it simplicial set ${\rm Bar}(G,N)$} is discussed in \cite[subsection 2.3]{FS}
(note that the map $n$ is suppressed from the notation).  In section 4 of \cite{FS}
it is shown that when the map $n$
admits a crossed module structure, then ${\rm Bar}(G,N)$ can be equipped with
a {\it simplicial group structure}.  Moreover, in this case the natural map
$G\to {\rm Bar}(G,N)_k$ is a homomorphism, for all $k\ge 0$.  We
then say that ${\rm Bar}(G,N)$ can be equipped with
a {\it normal} simplicial group structure (the interested reader
can look at 
\cite[Definition 2.4.1]{FS}).

In order to prove parts (2) and (3) of Theorem \ref{thm main}
we construct a simplicial normal embedding $N_{\bu}\hookrightarrow X_{\bu},$
where $N_{\bu}$ is a constant simplicial group ($N_k\cong N$) and
$X_{\bu}$ is given in subsection \ref{sub xdot} below.
The simplicial group $X_{\bu}$ is homotopically descrete and equivalent
to $G$.

To show that $N_{\bu}\hookrightarrow X_{\bu}$ is
a normal embedding with the required properties on the
connected components, we present $N_{\bu}$ as a kernel of
a simplicial group map from $X_{\bu}$ to ${\rm Bar}(G,N)$.

The first step in the construction of $X_{\bu}$ is to consider the contractible
simplicial group ${\rm Bar}(N,N)$ in the following lemma.

\begin{lemma}\label{b(n,n)}
Let $N$ be a group.  Then ${\rm Bar}(N,N),$ where the map $n'\colon N\to N$
used to define ${\rm Bar}(N,N)$
is the identity map, with the multiplication on ${\rm Bar}(N,N)_k$
defined by
\[
(a_0,\dots,a_k)*(b_0,\dots,b_k)=(a_0b_0,a_1^{b_0}b_1,a_2^{b_0b_1}b_2,\dots,a_k^{b_0\cdots b_{k-1}}b_k).
\]
gives a normal simplicial group structure on ${\rm Bar}(N,N).$
\end{lemma}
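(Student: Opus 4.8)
The plan is to recognize ${\rm Bar}(N,N)$ as the special case of the general bar construction ${\rm Bar}(G,N)$ in which $G=N$ and the structure map is the identity $n'=\mathrm{id}_N$, and then to invoke the construction of \cite[Section 4]{FS}, which endows ${\rm Bar}(G,N)$ with a normal simplicial group structure as soon as the map $n$ carries a crossed module structure. Thus the first, and essentially the only conceptual, step is to exhibit a crossed module structure on $\mathrm{id}_N$.

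For this I would let $N$ act on itself by conjugation, giving a homomorphism $N\to\Aut(N)$, and verify the two normal-map axioms (NM1) and (NM2) of \cite[subsection 2.4]{FS}, exactly as these were used in the proof of Lemma \ref{lem normal}. Writing the conjugate of $m$ by $g$ as $m^g=g^{-1}mg$, axiom (NM1) asserts $(m^g)n'=(m\,n')^g$, which since $n'=\mathrm{id}_N$ reduces to the tautology $m^g=m^g$; and (NM2) asserts $m^{(h)n'}=m^{h}$, again a tautology because $(h)n'=h$ and the action of $h\in G=N$ is conjugation by $h$. Hence $n'$ is a normal map and the construction of \cite[Section 4]{FS} applies, producing on ${\rm Bar}(N,N)$ a simplicial group structure whose face and degeneracy operators are group homomorphisms.

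It then remains to identify the multiplication produced by that general construction with the one displayed in the statement, and to confirm the normality clause. In the general formula the $i$-th coordinate of a product is $a_i^{\,b_0\cdots b_{i-1}}b_i$, where the exponent denotes the crossed module action of $b_0\cdots b_{i-1}$; since here that action is literally conjugation in $N$, the product is exactly $(a_0b_0,\,a_1^{b_0}b_1,\,\dots,\,a_k^{b_0\cdots b_{k-1}}b_k)$. Normality, namely that the natural map $N\to{\rm Bar}(N,N)_k$, $g\mapsto(g,e,\dots,e)$, is a homomorphism for every $k$, is then visible directly from this formula, since $(g,e,\dots,e)*(h,e,\dots,e)=(gh,e,\dots,e)$.

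As an independent check that $*$ really is a group law on $N^{k+1}$, and the one computation I would isolate, I would use the change of coordinates $\phi\colon(a_0,\dots,a_k)\mapsto(a_0,\,a_0a_1,\,\dots,\,a_0a_1\cdots a_k)$. A short telescoping argument, absorbing each conjugating factor $b_0\cdots b_{i-1}$ into the preceding partial product, shows that the $i$-th partial product of $a*b$ equals the product of the $i$-th partial products of $a$ and of $b$; thus $\phi$ carries $*$ to the coordinatewise product and identifies ${\rm Bar}(N,N)_k$ with the direct power $N^{k+1}$. This makes the group axioms transparent and is consistent with (indeed explains) the contractibility of ${\rm Bar}(N,N)$ noted above. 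The genuine difficulty is therefore not conceptual but a matter of bookkeeping: matching the order of the factors in the exponent $b_0\cdots b_{i-1}$ and the left/right conventions of \cite{FS}, which the coordinate computation confirms independently.
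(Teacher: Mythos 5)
Your proposal is correct and follows essentially the same route as the paper: observe that the identity map $n'=\mathrm{id}_N$ is a normal map with conjugation as the crossed module structure, and then invoke the general result of \cite[Theorem 4.1]{FS} endowing ${\rm Bar}(G,N)$ with a normal simplicial group structure. Your additional verifications --- checking (NM1) and (NM2) explicitly and the telescoping change of coordinates $\phi$ identifying $({\rm Bar}(N,N)_k,*)$ with the direct power $N^{k+1}$ --- are sound and go beyond the paper's two-line proof, but they do not change the underlying argument.
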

\begin{proof}
Recall that for $k\ge 0,$  ${\rm Bar}(N,N)_k:=\overbrace{N\times\dots\times N}^{(k+1)\text{-times}}$.
Since $n'$ is the identity map, it is a normal map
with the normal structure: conjugation
in $N$.  Hence the lemma follows from \cite[Theorem 4.1]{FS}.
\end{proof}

Throughout the rest of this section $n\colon N\to G$ is a normal map
with the normal structure $\ell\colon G\to\Aut(N),$ as defined in
\cite[subsection 2.5, p.~362]{FS}.  As in \cite{FS} we denote
\[
\ell(g)\colon a\mapsto a^g,\ a\in N\text{ and }g\in G.
\]
\setcounter{subsection}{1}
\subsection{The simplicial group $G\ltimes {\rm Bar}(N,N)$}\label{sub xdot}\hfill
\numberwithin{prop}{subsection}
\setcounter{prop}{0}
\medskip

\noindent
The purpose of this subsection is to define the simplicial group
\[
X_{\bu}=G\ltimes {\rm Bar}(N,N).
\]
Recall that ${\rm Bar}(G,N)$ is endowed with a normal simplicial group
structure by \cite[section 4, p.~365]{FS}

\begin{lemma}\label{lem lk}
The map $\ell_k\colon G\mapsto \Aut({\rm Bar}(N,N)_k)$ defined by
\[\tag{$*$}
\ell_k(g)\colon (a_0,\dots,a_k)\mapsto (a_0^g,\dots,a_k^g)
\]
is a group homomorphism, for all $k\ge 0$.
\end{lemma}
\begin{proof}
By \cite[Lemma 3.5(1)]{FS},
the group $N_k$ defined in \cite[Notation 2.4.4(2)]{FS}
for $k\ge 1$ is isomorphic to ${\rm Bar}(N,N)_{k-1}$.
By \cite[Lemma 3.5(2)]{FS}, the map defined in $(*)$
is conjugation by $(g,1,\dots,1)$ in ${\rm Bar}(G,N)_{k+1},$
hence this map is an automorphism of ${\rm Bar}(N,N)_k$.
Clearly $\ell_k$ is a homomorphism.
\end{proof}

\begin{notation}\label{not X}
We denote by $X_{\bu}$ the following simplicial group
(see Lemma \ref{lem X} below for a proof).  For $k\ge 0$ let $\ell_k\colon G\mapsto \Aut({\rm Bar}(N,N)_k)$
be as in Lemma
\ref{lem lk}.  We let $X_k$ be the semidirect product $X_k=G\ltimes_{\ell_k} {\rm Bar}(N,N)_k$.
We  define the face maps by $d_i(g,(a_0,\dots,a_k))=(g,d_i(a_0,\dots,a_k)),$
$k\ge 1,$
and the degeneracy maps by $s_i(g,(a_0,\dots,a_k))=(g,s_i(a_0,\dots,a_k)),$
$i\ge 0$.
\end{notation}

\begin{lemma}\label{lem X}
Let $X_{\bu}$ be as in notation \ref{not X}.  Then
$X_{\bu}$ is a simplicial group.
\end{lemma}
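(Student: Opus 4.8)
The plan is to check the three things required of a simplicial group: that each $X_k$ is a group, that the face and degeneracy maps of Notation \ref{not X} are group homomorphisms, and that they satisfy the simplicial identities. The first is immediate. By Lemma \ref{lem lk} the map $\ell_k\colon G\to\Aut({\rm Bar}(N,N)_k)$ is a homomorphism, so the semidirect product $X_k=G\ltimes_{\ell_k}{\rm Bar}(N,N)_k$ is a genuine group, and nothing further is needed for this point.

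For the simplicial identities I would observe that both $d_i$ and $s_i$ fix the $G$-coordinate and act on the second coordinate exactly as the face and degeneracy maps of ${\rm Bar}(N,N)$. Since ${\rm Bar}(N,N)$ is a simplicial group by Lemma \ref{b(n,n)}, its faces and degeneracies already satisfy all the simplicial identities; because the $G$-coordinate is carried along untouched, the same identities hold verbatim for $X_\bu$, with no further computation.

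The real content is that each $d_i$ and $s_i$ is a homomorphism of the semidirect products. Writing the multiplication in $X_k$ in the usual way—a product $(g_1,a_1)(g_2,a_2)$ has $G$-component $g_1g_2$ and second component the $*$-product of $a_1,a_2$ after twisting one factor by $\ell_k$—and then applying $d_i$ to such a product and comparing it with the product of the images, one sees that $d_i$ is a homomorphism precisely when (a) $d_i$ is a homomorphism of ${\rm Bar}(N,N)_k$, which holds by Lemma \ref{b(n,n)}, and (b) $d_i$ is $G$-equivariant, i.e.\ $d_i\circ\ell_k(g)=\ell_{k-1}(g)\circ d_i$ for every $g\in G$; the same reduction applies to $s_i$ with the identity $s_i\circ\ell_k(g)=\ell_{k+1}(g)\circ s_i$.

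I expect step (b), the equivariance, to be the main obstacle, and I would dispose of it structurally rather than coordinate by coordinate. Since $\ell_k(g)$ is just the automorphism $\ell(g)\in\Aut(N)$ applied in each of the $k+1$ coordinates, the family $\{\ell_k(g)\}_k$ is exactly the simplicial self-map of ${\rm Bar}(N,N)$ induced by functoriality of the bar construction from $\ell(g)$. A simplicial map commutes with all faces and degeneracies by definition, so the required identities in (b) hold automatically. This is precisely where the hypothesis that $\ell(g)$ is an \emph{automorphism} of $N$ enters: it guarantees that conjugation by $g$ respects the group operations out of which the bar faces and degeneracies are built, so that $\ell_\bu(g)$ is genuinely simplicial. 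With (a) and (b) in hand, the homomorphism property of the $d_i$ and $s_i$ follows, completing the verification that $X_\bu$ is a simplicial group.
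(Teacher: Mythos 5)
Your proof is correct, but it takes a genuinely different route from the paper's. The paper proves the lemma by brute force: it expands the semidirect product multiplication in coordinates, applies $d_0$ (and separately $d_k$) to a product and to the factors, checks the two results agree, and then declares the remaining faces ``identical'' and the degeneracies ``similar and omitted.'' You instead isolate the structural principle underneath: a map of semidirect products of the form $(g,a)\mapsto(g,f(a))$ is a homomorphism precisely when $f$ is a homomorphism of the second factors and is $G$-equivariant, $f\circ\ell_k(g)=\ell_{k-1}(g)\circ f$. The homomorphism half you get for free from Lemma \ref{b(n,n)} (the faces and degeneracies of ${\rm Bar}(N,N)$ are homomorphisms because it is a simplicial group), and the equivariance half from naturality: $\ell_k(g)$ is the coordinatewise application of the automorphism $\ell(g)\in\Aut(N)$, hence the simplicial self-map of ${\rm Bar}(N,N)$ induced functorially by $\ell(g)$ (the bar faces multiply adjacent entries and the degeneracies insert $1$, both of which any automorphism respects, and $\ell(g)$ trivially commutes with the identity structure map $n'$), so it commutes with all $d_i$ and $s_i$ by definition of a simplicial map. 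Your observation that the simplicial identities hold because $X_\bu$ is, as a simplicial set, the constant simplicial set $G$ times ${\rm Bar}(N,N)$ is also correct and is left implicit in the paper. What your approach buys is uniformity and economy: one argument covers every $d_i$ and every $s_i$, including exactly the cases the paper waves off, and it pinpoints where the hypothesis $\ell(g)\in\Aut(N)$ is actually used. What the paper's computation buys is a self-contained explicit formula for multiplication in $X_k$, which it then reuses verbatim in the proof of Lemma \ref{lem eta}; your argument presupposes comfort with the semidirect-product criterion and with functoriality of the bar construction, both of which you state correctly but do not verify in coordinates.
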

\begin{proof}
We first show that $d_0$ is a group homomorphism.
Let
\[
(g,(a_0,\dots,a_k)),\ (h,(b_0,\dots,b_k))\in X_k.
\]
Then
\begin{gather*}
(g,(a_0,\dots,a_k))*(h,(b_0,\dots,b_k))=(gh,(a_0^h,\dots,a_k^h)*(b_0,\dots,b_k))\\
=(gh, (a_0^hb_0,\, (a_1^h)^{b_0}b_1,\, (a_2^h)^{b_0b_1}b_2,\,\dots\, ,(a_k^h)^{b_0b_1\cdots b_{k-1}}b_k)).
\end{gather*}
Hence
\begin{gather*}
d_0\big((g,(a_0,\dots,a_k))*(h,(b_0,\dots,b_k))\big)\\
=d_0\big((gh, (a_0^hb_0,\, (a_1^h)^{b_0}b_1,\, (a_2^h)^{b_0b_1}b_2,\,\dots\, ,(a_k^h)^{b_0b_1\cdots b_{k-1}}b_k))\big)\\
=(gh,\, ((a_0a_1)^hb_0b_1,\, (a_2^h)^{b_0b_1}b_2,\,\dots\, ,(a_k^h)^{b_0b_1\cdots b_{k-1}}b_k)).
\end{gather*}
Next,
\[
d_0(g,\,(a_0,\dots,a_k))=(g,(a_0a_1,a_2,\dots,a_k))
\]
and
\[
d_0(h,\,(b_0,\dots,b_k))=(h,(b_0b_1,b_2,\dots,b_k)).
\]
So
\begin{gather*}
d_0(g,\,(a_0,\dots,a_k))*d_0(h,\,(b_0,\dots,b_k))\\
=(g,(a_0a_1,a_2,\dots,a_k))*(h,(b_0b_1,b_2,\dots,b_k))\\
=(gh, ((a_0a_1)^h,a_2^h,\dots,a_k^h)*(b_0b_1,b_2,\dots,b_k))\\
=(gh,\, ((a_0a_1)^hb_0b_1,\, (a_2^h)^{b_0b_1}b_2,\,\dots\, ,(a_k^h)^{b_0b_1\cdots b_{k-1}}b_k)).
\end{gather*}
This shows that $d_0$ is a group homomorphism.  The proof that $d_i$ is
a group homomorphism, for $1\le i <k,$ is identical.

Next we show that $d_k$ is a group homomorphism.
We have
\begin{gather*}
d_k\big((g,(a_0,\dots,a_k))*(h,(b_0,\dots,b_k))\big)\\
=d_k\big((gh, (a_0^hb_0,\, (a_1^h)^{b_0}b_1,\, (a_2^h)^{b_0b_1}b_2,\,\dots\, ,(a_k^h)^{b_0b_1\cdots b_{k-1}}b_k))\big)\\
=(gh,\, (a_0^hb_0,\, (a_1^h)^{b_0}b_1,\,\dots\, ,(a_{k-1}^h)^{b_0b_1\cdots b_{k-2}}b_{k-1}))
\end{gather*}
while
\begin{gather*}
d_k(g,\,(a_0,\dots,a_k))*d_k(h,\,(b_0,\dots,b_k))\\
=(g,(a_0,a_1,\dots,a_{k-1}))*(h,(b_0,b_1,\dots,b_{k-1}))\\
=(gh,\, (a_0^hb_0,\, (a_1^h)^{b_0}b_1,\,\dots\, ,(a_{k-1}^h)^{b_0b_1\cdots b_{k-2}}b_{k-1})).
\end{gather*}

The proof that the degeneracies are group homomorphisms is similar and omitted here.
\end{proof}

\numberwithin{prop}{section}
\setcounter{prop}{2}
\begin{lemma}\label{lem eta}
The map $\eta\colon X_{\bu}\to {\rm Bar}(G,N)$ defined by
\[
\eta\colon (g,(a_0,\dots,a_k)\to (g(a_0n),a_1,\dots, a_k),
\]
is a homomorphism of simplicial groups.
\end{lemma}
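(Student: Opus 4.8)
The plan is to verify the two properties that define a homomorphism of simplicial groups: that $\eta$ restricts to a group homomorphism $X_k\to {\rm Bar}(G,N)_k$ in each degree $k$, and that $\eta$ commutes with all faces and degeneracies. For the first property I would first record the group law on ${\rm Bar}(G,N)_k=G\times N^k$ furnished by \cite[Section 4]{FS}: writing a typical element as $(g,a_1,\dots,a_k)$, it is the evident analogue of the law of Lemma \ref{b(n,n)}, namely
\[
(g,a_1,\dots,a_k)*(h,b_1,\dots,b_k)=\big(gh,\,a_1^{h}b_1,\,\dots,\,a_k^{h(b_1n)\cdots(b_{k-1}n)}b_k\big),
\]
where the exponents act through $\ell\colon G\to\Aut(N)$ and $bn$ denotes the image of $b$ under $n$. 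To see that $\eta$ preserves products I would expand $\eta\big((g,(a_0,\dots,a_k))*(h,(b_0,\dots,b_k))\big)$ using the multiplication of $X_k$ computed in the proof of Lemma \ref{lem X}, and compare it coordinatewise with $\eta(g,(a_0,\dots,a_k))*\eta(h,(b_0,\dots,b_k))$ computed from the law above. In the $G$-coordinate the two expressions, $gh\,(a_0^hb_0)n$ and $g(a_0n)\,h(b_0n)$, coincide by the equivariance axiom (NM1), $(a_0^h)n=(a_0n)^h$ (conjugation in $G$), together with $n$ being a homomorphism. In the $i$-th $N$-coordinate the exponent is a product of images $bn$, which by multiplicativity of $n$ collapses to $(b_0\cdots b_{i-1})n$; the Peiffer axiom (NM2), $x^{cn}=x^c$ with the right side conjugation in $N$, then converts the $\ell$-action into the conjugation in $N$ that appears on the $X_k$ side, so the coordinates match.

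For the second property I would use that, by Notation \ref{not X}, the faces and degeneracies of $X_{\bu}$ fix the $G$-factor and operate only on the ${\rm Bar}(N,N)$-coordinates, while $\eta$ merely absorbs the $0$-th coordinate $a_0$ into $g$ through $n$. Consequently every middle face $d_i$ and the last face $d_k$, as well as every degeneracy $s_i$, leave both $g$ and $a_0$ untouched and commute with $\eta$ at once. The only operator interacting non-trivially with $\eta$ is $d_0$, which replaces $(a_0,a_1,\dots)$ by $(a_0a_1,a_2,\dots)$: here the commutation of $\eta$ with $d_0$ amounts precisely to $(a_0a_1)n=(a_0n)(a_1n)$, i.e.~to $n$ being a homomorphism, and checking it simultaneously pins down the correct formula for $d_0$ on ${\rm Bar}(G,N)$.

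I expect the level-wise homomorphism check to be the only genuine obstacle, and it is one of bookkeeping rather than depth: one must track scrupulously which superscripts denote the $G$-action $\ell$ and which denote conjugation inside $N$, and apply (NM1) in the $G$-slot and (NM2) together with multiplicativity of $n$ in the $N$-slots. Once these identities are inserted in the right places the two sides agree term by term, after which the simplicial compatibility is entirely routine.
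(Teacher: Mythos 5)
Your proposal is correct and follows essentially the same route as the paper's proof: a degreewise coordinate comparison in which (NM1) reconciles the $G$-coordinates, (NM2) together with multiplicativity of $n$ reconciles the $N$-coordinates, and the simplicial compatibility reduces to the observation that only $d_0$ interacts with $\eta$, where commutation is exactly $(a_0a_1)n=(a_0n)(a_1n)$. The paper merely writes out the middle and last faces explicitly rather than dismissing them by your (valid) structural remark, and likewise omits the degeneracy check as routine.
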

\begin{proof}
We first show that $\eta_k\colon X_k\to {\rm Bar}(G,N)_k$ is a group homomorphism.
We have
\begin{gather*}
[(g,(a_0,\dots,a_k))*(h,(b_0,\dots,b_k))]\eta_k=(gh,(a_0^hb_0, (a_1^h)^{b_0}b_1,\dots,(a_k^h)^{b_0\cdots b_{k-1}}b_k))\eta_k\\
=(gh(a_0^hb_0)n,(a_1^h)^{b_0}b_1,\dots,(a_k^h)^{b_0\cdots b_{k-1}}b_k)\\
=(gh(a_0n)^h(b_0n),(a_1^h)^{b_0}b_1,\dots,(a_k^h)^{b_0\cdots b_{k-1}}b_k)\\
=(g(a_0n)h(b_0n),(a_1^h)^{b_0}b_1,\dots,(a_k^h)^{b_0\cdots b_{k-1}}b_k),
\end{gather*}
where we used the fact that $n$ is a normal map,  while
\begin{gather*}
(g,(a_0,\dots,a_k))\eta_k \circ(h,b_0,\dots,b_k)\eta_k=(g(a_0n),a_1,\dots,a_k)\circ(h(b_0n),b_1,\dots,b_k)\\
=(g(a_0n)h(b_0n), a_1^{h(b_0n)}b_1, a_2^{h(b_0n)(b_1)n}b_2,\dots a_k^{h(b_0n)(b_1\cdots b_{k-1})n}b_k)\\
=(g(a_0n)h(b_0n),(a_1^h)^{b_0}b_1,\dots,(a_k^h)^{b_0\cdots b_{k-1}}b_k).
\end{gather*}

Next we show that $\eta$ commutes with the face and degeneracy maps.  That is
$\eta_kd_i=d_i\eta_{k-1},$ for all $k\ge 1$.  And $\eta_ks_i=s_i\eta_{k+1},$ for all $k\ge 0$.

We first check it for $d_0$.  We have
\[
(g,(a_0,\dots,a_k))\eta_kd_0=(g(a_0n),a_1,\dots,a_k)d_0=(g(a_0a_1)n,a_2,\dots,a_k),
\]
while
\begin{gather*}
(g,(a_0,\dots,a_k))d_0\eta_{k-1}=(g,a_0a_1,a_2,\dots,a_k)\eta_{k-1}=(g(a_0a_1)n,a_2,\dots,a_k).
\end{gather*}
For $0<i<k,$ we have
\begin{gather*}
(g,(a_0,\dots,a_k))\eta_kd_i=(g(a_0n),a_1,\dots,a_k)d_i\\
=(g(a_0n),\dots, a_ia_{i+1},a_{i+2},\dots,a_k),
\end{gather*}
while
\begin{gather*}
(g,(a_0,\dots,a_k))d_i\eta_{k-1}=(g,a_0,\dots, a_ia_{i+1},a_{i+2},\dots a_k)\eta_{k-1}\\
=(g(a_0n),\dots, a_ia_{i+1},a_{i+2},\dots,a_k).
\end{gather*}
Also
\begin{gather*}
(g,(a_0,\dots,a_k))\eta_kd_k=(g(a_0n),a_1,\dots,a_k)d_k\\
=(g(a_0n),a_1,\dots,a_{k-1}),
\end{gather*}
while
\begin{gather*}
(g,(a_0,\dots,a_k))d_k\eta_{k-1}=(g,a_0,\dots a_{k-1})\eta_{k-1}\\
=(g(a_0n), a_1,\dots,a_{k-1}).
\end{gather*}

Checking   that $\eta$ commutes with the degeneracy maps is similar and omitted.
\end{proof}

\begin{lemma}\label{lem kereta}
Let $\eta\colon X_{\bu}\to {\rm Bar}(G,N)$ be the morphism of
simplicial groups defined in Lemma \ref{lem eta}.
Then $\ker{\eta}=M_{\bu}$ is the following normal simplicial subgroup of $X_{\bu}:$
\[
M_k=\{(an, (a^{-1},\overbrace{1,\dots,1}^{k\text{-times}}))\mid a\in N\},\quad k\ge 0.
\]
\end{lemma}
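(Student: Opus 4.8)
The plan is to exploit the fact, established in Lemma~\ref{lem eta}, that $\eta$ is a morphism of simplicial groups; the kernel of such a morphism is automatically a normal simplicial subgroup. Indeed, at each level $k$ the subgroup $\ker\eta_k$ is normal in $X_k$ simply because it is the kernel of a group homomorphism, and the collection $\{\ker\eta_k\}_{k\ge 0}$ is preserved by the face and degeneracy operators: if $x\in\ker\eta_k$, then the relations $\eta_kd_i=d_i\eta_{k-1}$ and $\eta_ks_i=s_i\eta_{k+1}$ of Lemma~\ref{lem eta} force $d_i x\in\ker\eta_{k-1}$ and $s_i x\in\ker\eta_{k+1}$. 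Hence the only genuine work is to compute $\ker\eta_k$ level by level and match it with the claimed description of $M_k$.

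To carry out this computation, I fix an element $(g,(a_0,\dots,a_k))\in X_k$ and apply $\eta_k$, obtaining $(g(a_0n),a_1,\dots,a_k)$ in ${\rm Bar}(G,N)_k$. This equals the identity $(1,1,\dots,1)$ precisely when $g(a_0n)=1$ in $G$ and $a_1=\dots=a_k=1$ in $N$. The second set of conditions kills every coordinate of the bar factor except the zeroth. For the first condition, since $n\colon N\to G$ is a group homomorphism we may rewrite $(a_0n)^{-1}=(a_0^{-1})n$, whence $g=(a_0^{-1})n$. Putting $a:=a_0^{-1}$ (so that $a_0=a^{-1}$) turns the element into $(an,(a^{-1},1,\dots,1))$, and as $a$ ranges over $N$ these are exactly the elements listed in $M_k$; conversely each such element visibly lies in $\ker\eta_k$. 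Thus $\ker\eta_k=M_k$, and combining this with the first paragraph gives $\ker\eta=M_{\bu}$ as a normal simplicial subgroup.

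I expect no serious obstacle; the one point deserving attention is the use of the homomorphism property of $n$ to replace $(a_0n)^{-1}$ by $(a_0^{-1})n$, which is precisely what produces the $G$-coordinate $g=an$ with $n$ applied to $a$ (rather than to $a^{-1}$) in the description of $M_k$. As a sanity check on the simplicial stability, one can confirm directly that $M_{\bu}$ is closed under, say, $d_0$: this operator fixes the $G$-coordinate and sends the bar factor $(a^{-1},1,\dots,1)$ to $(a^{-1}\cdot 1,1,\dots,1)=(a^{-1},1,\dots,1)$, so it carries the element of $M_k$ indexed by $a$ to the element of $M_{k-1}$ indexed by the same $a$, in accordance with the general argument.
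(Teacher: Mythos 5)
Your proposal is correct and matches the paper's approach: the paper dismisses this lemma as ``immediate from the definitions,'' and your argument is exactly that verification spelled out, i.e.\ computing $\ker\eta_k$ levelwise from $\eta_k(g,(a_0,\dots,a_k))=(g(a_0n),a_1,\dots,a_k)$ and invoking the general fact that the kernel of a simplicial group morphism is a normal simplicial subgroup. The substitution $a=a_0^{-1}$ using $(a_0n)^{-1}=(a_0^{-1})n$ is precisely the right bookkeeping, so nothing is missing.
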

\begin{proof}
This is immediate from the definitions.
\end{proof}

Recall that the (path) connected components of a simplicial group $H_{\bu},$
denoted $\pi_0(H_{\bu}),$
are defined as the equivalence classes of $H_0$ modulo the equivalence relation
given by $d_0(y)\tld d_1(y),$ for $y\in H_1$.  The {\it group of connected
components} of $H_{\bu}$ is thus $\pi_0(H_{\bu})=H_0/V$ where $V$ is the connected
component of the identity element in $H_0$.

Further, the geometric realization
of $H_{\bu},$ which we denote $|H_{\bu}|,$ is a topological group
with the same group of connected components as that of $H_{\bu},$
that is $\pi_0(H_{\bu})=\pi_0(|H_{\bu}|)$.

\begin{remark}
Note that $M_{\bu}$ of Lemma \ref{lem kereta} is discrete.
Indeed, if $(an,(a^{-1},1))\in M_1,$ then $d_0((an,(a^{-1},1)))=(an,a^{-1})=d_1((an,(a^{-1},1))).$
\end{remark}

Recall that for a topological group $\frakG$ we denote by $\frakG_1$
the connected components of the identity of $\frakG$.

\begin{lemma}\label{lem thmmain2}
Let $\frakX:=|X_{\bu}|$ and $\frakM:=|M_{\bu}|$.
\begin{enumerate}
\item
$\frakX_1=\{(1,a))\mid a\in N\}\cong N$.
Hence
\[
\pi_0(\frakX)=X_0/\frakX_1=(G\ltimes_{\ell} N)/\frakX_1\cong G.
\]
\item
$\pi_0(\frakM)=M_0=\{(an,a^{-1})\mid a\in N\}\cong N.$

\item
Let $\iota\colon \frakM\to \frakX$ be the inclusion map and let
$\bar n\colon\pi_0(\frakM)\to\pi_0(\frakX)$ be the map
induced from $\iota$.  Then
we have the following commutative diagram
\[
\xymatrix{
\pi_0(\frakM)\ar[r]^{\bar{n}}\ar[d]_{(an,a^{-1})\mapsto a} & \pi_0(\frakX)\ar[d]^{(g,1)\frakX_1\mapsto g}\\
N\ar[r]^n                   &G
}
\]
where the vertical arrows are isomorphisms.
\end{enumerate}
\end{lemma}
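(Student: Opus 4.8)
The plan is to treat the three parts in order, the only substantial point being the identification of the connected component $\frakX_1$ of the identity in part (1). For this I would use the description of $\pi_0$ of a simplicial group recalled just above the lemma: since $\pi_0(\frakX)=\pi_0(X_\bu)=X_0/\frakX_1$ and the equivalence relation is generated by $d_0(y)\tld d_1(y)$, the subgroup $\frakX_1$ is precisely $\{d_0(y)d_1(y)^{-1}\mid y\in X_1\}=d_0(\ker d_1)$, these two coinciding because the degeneracy $s_0$ splits $d_1$. Writing a general element of $X_1=G\ltimes{\rm Bar}(N,N)_1$ as $(g,(a_0,a_1))$ and reading off the face maps from Notation \ref{not X} and Lemma \ref{lem X}, namely $d_0(g,(a_0,a_1))=(g,a_0a_1)$ and $d_1(g,(a_0,a_1))=(g,a_0)$, I would solve $d_1(y)=(1,1)$ to get $g=1$, $a_0=1$, and then compute $d_0(1,(1,a_1))=(1,a_1)$. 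This gives $\frakX_1=\{(1,a)\mid a\in N\}$, the normal subgroup $\{1\}\ltimes N$ of $X_0=G\ltimes_\ell N$, so that the quotient map $(g,x)\frakX_1\mapsto g$ furnishes the asserted isomorphism $\pi_0(\frakX)\cong G$.

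For part (2), I would invoke the Remark preceding the lemma, which shows $d_0=d_1$ on all of $M_1$, so the generating relation on $M_0$ is trivial; hence $\frakM_1$ is trivial and $\pi_0(\frakM)=M_0$. It then remains to verify that $(an,a^{-1})\mapsto a$ is a group isomorphism $M_0\to N$. The decisive computation is $(an,a^{-1})*(bn,b^{-1})=((ab)n,(a^{-1})^{bn}b^{-1})$, where I would use that $n$ is a homomorphism in the first coordinate and the normal-map (Peiffer) axiom $(a^{-1})^{bn}=(a^{-1})^b$ in the second; the latter collapses the second coordinate to $(ab)^{-1}$, so the map is multiplicative and visibly bijective.

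For part (3), I would chase an element $(an,a^{-1})\in\pi_0(\frakM)=M_0$ around the square. Down-then-right sends it to $a$ and then to $an$. For right-then-down, the inclusion $\iota$ carries it to the coset $(an,a^{-1})\frakX_1$ in $\pi_0(\frakX)$; since $\frakX_1=\{(1,c)\mid c\in N\}$ by part (1), right multiplication by $(1,a)$ rewrites this coset as $(an,1)\frakX_1$, which the right-hand isomorphism sends to $an$. The two routes agree, so the square commutes, and both vertical arrows are isomorphisms by parts (1) and (2).

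The main obstacle is the identification of $\frakX_1$ in part (1): one must correctly translate ``connected component of the identity'' into the simplicial formula $d_0(\ker d_1)$ and evaluate the two face maps on $X_1$ without error. The one place where normality is genuinely used is the collapse $(a^{-1})^{bn}=(a^{-1})^b$ in part (2), coming from the normal-map axiom; everything else is routine bookkeeping in the semidirect product multiplication.
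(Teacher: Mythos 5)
Your proposal is correct and follows essentially the same route as the paper: the paper's proof of part (1) makes the same computation with the roles of $d_0$ and $d_1$ interchanged (solving $d_0(y)=(1,1)$, $d_1(y)=(g,a)$ to get $\frakX_1=\{(1,a)\mid a\in N\}$), and it dismisses parts (2) and (3) as obvious. Your extra details --- the identification $\frakX_1=d_0(\ker d_1)$ via the splitting $s_0$, the use of the Peiffer axiom $(a^{-1})^{bn}=(a^{-1})^{b}$ to see that $M_0\to N$ is multiplicative, and the coset rewriting $(an,a^{-1})\frakX_1=(an,1)\frakX_1$ --- are exactly the verifications the paper leaves implicit, and they all check out.
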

\begin{proof}
For part (1), let $(g,a)\in X_0$.  Then $(g,a)$ is in the same connected
component of $(1,1)$ iff there exists $(h,(a_0,a_1))\in X_1,$ such
that $d_0((h,(a_0,a_1))=(1,1)$ and $d_1((h,(a_0,a_1)))=(g,a)$.
Hence $(h,a_0a_1)=(1,1)$ and $(h,a_0)=(g,a)$.  Thus $h=1$ and $a_1=a_0^{-1}$.
Parts (2) and (3) are obvious.
\end{proof}

Note now that Lemma \ref{lem thmmain2} is part (2) of Theorem \ref{thm main}.
Part (3) follows immediately from the constructions above. Since ${\rm Bar}(N,N)$ is contractible,  the map of simplicial groups
is by construction a  map of homotopically discrete spaces. \qed


\end{document}